\newtheorem{theorem}{Theorem}[section]
\newtheorem*{theorem*}{Theorem}
\newtheorem{proposition}[theorem]{Proposition}
\newtheorem{algorithm}[theorem]{Algorithm}
\newtheorem*{proposition*}{Proposition}
\newtheorem{lemma}[theorem]{Lemma}
\newtheorem*{corollary*}{Corollary}
\newtheorem{remark}[theorem]{Remark}
\theoremstyle{definition}
\newtheorem{example}[theorem]{Example}
\newtheorem{problem}[theorem]{Problem} 
\author{Avichai Tendler}
\author{Uri Alon}
\email{tendlea@gmail.com, urialonw@gmail.com}
\thanks{}
\def\RR{{\mathbb R}}
\def\PP{{\mathbb P}}
\begin{document}

\title{Approximating Functions on Boxes}

\begin{abstract}
The vector space of all polynomial functions of degree $k$ on a box of dimension $n$ is of dimension ${n \choose k}$. A consequence of this fact is that a function can be approximated on vertices of the box using other vertices to higher degrees than expected. This approximation is useful for various biological applications such as predicting the effect of a treatment with drug combinations and computing values of fitness landscape.   
\end{abstract}

\date{\today}

\maketitle
\section{Introduction}

The process of drug discovery is challenging and expensive \cite{Morgan2011}, but even while existing drugs might not bring a cure, sometimes a combination of two or more drugs might act synergistically and work better than expected by the individual effects \cite{DeVita1975}. Assume we have $n$ different drugs (e.g. antibiotics) and we want to use an effective drug combination. Usually it is infeasible to measure the effect of all $2^n$ possible combinations, hence it is useful to measure only a subset of this exponential space and predict the rest, for example we can measure the effect of only $n$ singles and ${n \choose 2}$ pairs and try to extrapolate \cite{Wood2012}. Another related relevant question is which subset of the space to measure in order to get an optimal approximation for the entire space.

Another example is an estimation of fitness landscapes \cite{Jin2005}. Assume we want to estimate the dependency of a fitness of an organism on its genome, if there are $n$ possible different mutations, there will be $2^n$ possible genomes. We wish to approximately map the entire fitness landscape without making all  mutations explicitly in the lab. Which mutation we should have in order to obtain a good approximation of the entire fitness landscape? We will also treat a common experimental situation, where we can only get random mutations, how many mutations will be needed to get a given approximation of the entire fitness landscape?  

Both the drug combination and fitness landscape problems (and others), boil down into an approximation of functions on box vertices. The different drug combinations effects or fitness landscape values are values of a function on vertices of a box. We are given values of this function on some of vertices of the box, and we wish to estimate it on the other vertices. Another problem is choosing a set of vertices which well approximate the rest. In this paper we treat these problems from algebro-geometric perspective. Interestingly, because of the fact that all polynomial functions on hypercube are spanned by the set of square-free monomials, function estimations using values on box vertices are "better than expected".

Here we compute the minimal number of values of a function on box vertices necessary in order to obtain estimations of the function on all vertices of the box. We also give a linear-algebra-based algorithm to test whether  a given set of vertices are enough to estimate a function on all vertices to a given order. Besides, we compute and simulate probabilities of random sets of vertices to estimate a function to the first order, and we show that in general, a random set of points is good for estimation with high probability. We formalize these statements below.

\section{Notation and Problem formulation}
We work over the field $\RR$ since this is the relevant field for most applications. Some of the results are valid for other fields.

We are interesting in the question of approximating a suitably differentiable function $f:\RR^n\to\RR$ on a point $t\in \RR^n$ using its values on other points $S\subseteq\RR^n$. To be more precise, to which order in "Taylor series" a function can be approximated at $t$ assuming only its values on the set $S$ are known.

The Taylor polynomial of degree $k$ of a function $f$ at a point $p$, is a polynomial $g$ of degree $k$ with $g^{(i)}(p)=f^{(i)}(p)$ for $i=0\dots k$. This is the unique polynomial of degree up to $k$ which satisfies the above equalities. Similarly, we are asking if the values of the function $f$ on the set $S$ determine a unique polynomial $g$ of degree $k$ such that $g(p)=f(p)$ for all $p\in S$.

Even if the polynomial $g$ is not unique, its value at a given specific point $t$ can be sometimes determined uniquely. This motivates the following algebro-geometric formalization of Taylor approximation:

\begin{problem}
Given a set of points $S\subseteq\RR^n$ and a point $t$, find the maximal $k$ such that any polynomial of degree up to $k$ which vanishes on $S$, vanishes also on $t$. 
\end{problem}

From now on, this is what we will mean when we say "A function $f$ can be approximated to the $k$-th order at $t$ using its values on $S$". We denote it by $k=deg(S\to t)$.

Note that this definition is equivalent to the statement that the values of a polynomial of degree $k$ on $S$, determine its value on $t$. 

\section{Preliminaries: Points in non-general position and the Cayley-Bacharach theorem}

If the set of points $S\cup \{t\}$ is in general position, the maximal degree of approximation $k=deg(S\to t)$ can be computed using counting arguments. There are $\binom{n+k}{k}$ polynomials of degree up to $k$ (same as homogenous forms of degree $k$ in $\PP^n$), therefore a polynomial of degree $k$ is determined by $\binom{n+k}{k}$ points. Thus, in a single variable $n=1$ a line is determined by 2 points, a quadratic by 3 points etc. while for $n=2$ a plane is determined by 3 points, and a conic by 6 points. Conversely, values of plane quadratic polynomial on 6 points in general position, determine the polynomial uniquely.

Interestingly, there are degenerate cases in which fewer points are enough to obtain the same degree of approximation. An example is the Cayley-Bacharach theorem \cite{Bacharach1886, Eisenbud1996}: 
\begin{theorem}
Let $f_1,f_2\subseteq\PP^2$ be two cubic plane curves meeting at nine points $s_1,...s_8,t$. If $f\subseteq\PP^2$ is any cubic containing $s_1,...s_8$, then $f$ contains also $t$.
\end{theorem}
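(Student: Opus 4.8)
The plan is to reduce the theorem to a dimension count. View a cubic curve in $\PP^2$ as the zero locus of a nonzero homogeneous cubic form in three variables, and recall that these forms make up a $10$-dimensional vector space. The forms $f_1,f_2$ are linearly independent, since they cut out distinct curves, and both vanish on $s_1,\dots,s_8$; hence they span a pencil inside the space $W$ of cubic forms vanishing on all of $s_1,\dots,s_8$. Since vanishing at one point is a single linear condition, $\dim W\ge 10-8=2$. If we can show $\dim W=2$, then $W$ is precisely the pencil spanned by $f_1$ and $f_2$, so any cubic $f$ through $s_1,\dots,s_8$ equals $\lambda f_1+\mu f_2$ for some scalars and therefore also vanishes at $t$ — which is exactly the claim. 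So it suffices to prove that $s_1,\dots,s_8$ impose independent conditions on cubics, i.e.\ that $\dim W=2$.

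To set this up I would first extract two facts from B\'ezout's theorem about the nine points $s_1,\dots,s_8,t$ where the two cubics meet. First, no four of them are collinear: a line meeting a cubic in four points must be a component of it, so a line through four of the nine points would be a common component of $f_1$ and $f_2$, contradicting that their curves meet in only finitely many points. Second, no seven of them lie on a conic: an irreducible conic meeting a cubic in seven points is contained in it, again forcing a common component; and a reducible conic through seven of the points would put at least four of them on a line, already excluded.

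The technical core is then the classical lemma that any set of at most eight points of $\PP^2$ with no four collinear and no seven on a conic imposes independent conditions on cubics. I would prove it by producing, for each point $p$ of the set, a cubic through all the other points that misses $p$; this is equivalent to linear independence of the point-evaluation functionals on cubic forms. If some three of the other points lie on a line $\ell$, then $p\notin\ell$ (otherwise four points are collinear), and one uses $\ell$ together with a conic through the remaining at most four points that avoids $p$ — such a conic exists because those points together with $p$ form at most five points with no four collinear, which impose independent conditions on conics. If no three of the other points are collinear — so there are seven of them in general position — one forms cubics as the union of the unique conic through five of them with the line through the other two. Ranging over the $\binom{7}{2}$ ways of choosing the two points whose line is taken, if $p$ were on every resulting cubic, a pigeonhole count finishes the job: at most three of the chosen pairs are collinear with $p$, so among the rest two pairs share a point, and the two conics attached to those two choices then pass through a common set of five points, hence coincide and pass through seven of the eight points — contradicting the hypothesis.

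The step I expect to be the real obstacle is precisely this final case analysis inside the lemma: checking that the ancillary configurations are non-degenerate enough to pin down the lines and conics used, and handling the sub-cases where $p$ is collinear with a pair of the remaining points so that some of the auxiliary conics become reducible. Everything outside the lemma is short: the B\'ezout observations give the hypotheses for the eight points $s_1,\dots,s_8$, and the dimension count then yields the theorem at once.
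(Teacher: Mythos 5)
The paper does not prove this theorem at all: it appears in the preliminaries as a quoted classical result with references (\cite{Bacharach1886, Eisenbud1996}), so there is no in-paper argument to compare against. Your outline is the standard classical proof --- essentially the one in the Eisenbud--Green--Harris survey the paper cites --- and it is correct. The reduction to showing that $s_1,\dots,s_8$ impose independent conditions on the $10$-dimensional space of cubic forms is right, and so are the two B\'ezout consequences (no four of the nine points collinear, no seven on a conic, with your pigeonhole remark handling reducible conics). The case analysis you flag as the obstacle does close up: in the first case the conic through the remaining four points avoiding $p$ exists because those four points together with $p$ are at most five points with no four collinear, hence impose independent conditions on conics; in the second case the pairs of $q_i$ collinear with $p$ are pairwise disjoint (a shared point would force four collinear points), so at most three of the $\binom{7}{2}=21$ pairs are excluded, and any two surviving pairs sharing a point yield two conics through the common five points $p,q_4,\dots,q_7$, which again impose independent conditions and so determine the conic uniquely --- putting seven of the eight points on one conic, a contradiction. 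The only ingredients left implicit are the standard facts that at most five points with no four collinear impose independent conditions on conics and that five points with no three collinear lie on a unique irreducible conic, plus the tacit (and intended) assumption that the nine intersection points are distinct; with those spelled out, your proof is complete.
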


The Cayley-Bacharach theorem implies that if we take $S=\{s_1,\dots,s_8\}$ of the theorem, then $deg(S\to t)=3$. This is nontrivial since $\|S\|=8$ and for points in general position we will usually need $\|S\|=\binom{2+3}{3}=10$ to get  $deg(S\to t)=3$.

\section{A function on box vertices can be estimated to "higher order than expected"}
The following theorem is of the Cayley-bacharach theorem, it states that the vertices of the hypercube do surprisingly well in approximating one another. 

\begin{theorem}\label{mainThm}
Let $V$ be the set of vertices of an $n$ dimensional box. Let $k<n$, then there exists a (non-unique) subset $U\subseteq V$ such that $\|U\|=\sum_{i=0}^k{n \choose i}$ and $k=deg(U\to v)$ for any $v\in V-U$.
\end{theorem}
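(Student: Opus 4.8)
The plan is to reduce the statement to linear algebra in the space of functions on the cube, exploiting the square-free monomial basis highlighted in the introduction. Without loss of generality take $V=\{0,1\}^n$, and identify each vertex $v$ with its support $T_v=\{i:v_i=1\}\subseteq[n]$; then the monomial $x^A=\prod_{i\in A}x_i$ takes the value $1$ at $v$ when $A\subseteq T_v$ and $0$ otherwise. Since $x_i^2$ and $x_i$ agree on $V$, reducing a polynomial modulo the vanishing ideal of $V$ never raises its degree, so the restriction to $V$ of any polynomial of degree at most $m$ lies in $W_m:=\operatorname{span}\{x^A:|A|\le m\}$, and (the zeta-matrix of the Boolean lattice being triangular under an inclusion-refining order) $\dim W_m=\sum_{i=0}^m\binom{n}{i}$. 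I would take $U$ to be the set of vertices whose support has size at most $k$, so that $\|U\|=\sum_{i=0}^k\binom{n}{i}$ as required.

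For the lower bound $deg(U\to v)\ge k$, valid simultaneously for all $v\in V\setminus U$, I would show the evaluation map $W_k\to\RR^U$, $p\mapsto(p(u))_{u\in U}$, is an isomorphism; since domain and target have the same dimension $\|U\|$, it suffices to prove injectivity. If $p=\sum_{|A|\le k}c_Ax^A$ vanishes on $U$ but $p\neq0$, choose $A^*$ of minimal cardinality with $c_{A^*}\neq0$; evaluating $p$ at the vertex with support $A^*$ (which belongs to $U$ because $|A^*|\le k$) gives $\sum_{A\subseteq A^*}c_A=c_{A^*}\neq0$, because every $A\subsetneq A^*$ has $|A|<|A^*|$ and hence $c_A=0$ — a contradiction. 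Injectivity says a degree-$\le k$ polynomial vanishing on $U$ vanishes on all of $V$, in particular at any $v\in V\setminus U$.

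For the matching upper bound I would produce, for each $v\in V\setminus U$, an explicit degree-$(k+1)$ polynomial that vanishes on $U$ but not at $v$: since $v\notin U$ we have $|T_v|\ge k+1$, so pick $A_0\subseteq T_v$ with $|A_0|=k+1$ and set $p=\prod_{i\in A_0}x_i$. Then $p(v)=1$, while for every $u\in U$ we have $|T_u|\le k<|A_0|$, so $A_0\not\subseteq T_u$ and $p(u)=0$. Hence $deg(U\to v)\le k$, which combined with the previous paragraph gives $deg(U\to v)=k$.

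The only real content is the reduction in the first paragraph: restriction to $V$ collapses each monomial onto its square-free part without increasing degree, which turns the problem into the triangularity of the zeta/M\"obius matrix of the Boolean lattice; after that I do not expect a genuine obstacle. The one point requiring care is the precise reading of Problem 1 — "degree at most $k$" refers to honest polynomials in $\RR[x_1,\dots,x_n]$, so I must confirm that replacing each such polynomial by its restriction in $W_k$ (rather than arguing directly with square-free polynomials of degree $\le k$) loses nothing, which is exactly the degree-non-increasing property just noted.
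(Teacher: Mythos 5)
Your proof is correct, and it reaches the theorem by a genuinely different route than the paper. The paper first proves a parity identity: for any polynomial $f$ of degree at most $n-1$, $\sum_{H(v)\,\mathrm{even}} f(v) = \sum_{H(v)\,\mathrm{odd}} f(v)$ over the vertices of the cube (checked monomial by monomial, by pairing vertices along a coordinate absent from the monomial). It then applies this identity on $(k+1)$-dimensional subcubes to solve for $f$ at each vertex of Hamming weight $k+1$ in terms of the vertices below it, and iterates upward through the Hamming levels; the set $U$ is the same as yours, the vertices of weight at most $k$. Your argument instead works with the square-free reduction and the triangularity of the evaluation matrix of the monomials $x^A$ against vertices ordered by support, proving injectivity of evaluation on $U$ by a minimal-support argument. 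That mechanism is essentially what the paper uses later, in the proof of Remark \ref{NumPointsTight}, to show the count $\sum_{i=0}^k{n \choose i}$ is tight, so your proof in effect merges the theorem with that remark. Two differences are worth noting: the paper's route yields an explicit inclusion--exclusion formula for the extrapolated values (as in Example \ref{SecOrderOf3d}), whereas yours is a uniqueness-by-dimension-count argument; on the other hand, you also verify the upper bound $deg(U\to v)\le k$ by exhibiting a degree-$(k+1)$ square-free monomial supported inside $T_v$ that vanishes on $U$ but not at $v$ --- a point needed for the stated equality $k=deg(U\to v)$ that the paper's derivation leaves implicit.
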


This theorem is equivalent to the fact that the set of square-free monomials form a basis for the polynomials on the hypercube. Although this fact is known, we provide a proof here for completeness.

This result is non-trivial, since to evaluate a polynomial of degree $k$ on a general point  in $n$-dimensional space we need $\binom{n+k}{k}$ points. The theorem states that box vertices are special, hence $\sum_{i=0}^k{n \choose i}$  are enough. This reduction in the number of points is illustrated in Figure \ref{Theorem_example}.

\begin{figure}
\centering
\includegraphics[width=1\textwidth]{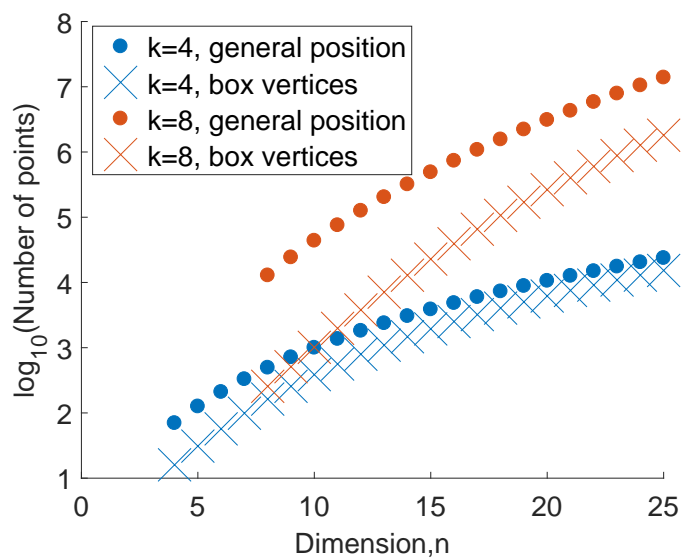}
\caption{The number of points needed to approximate a point in general position (dots), is larger than the number needed to approximate all box vertices (x). Plots show the number of points needed (log-scale) as function of the dimension $n$. Example plots for degrees of approximation $k=4$ (blue) and $k=8$ (orange).}
\label{Theorem_example}
\end{figure}

\section{A function on box can be well approximated using its values on selected vertices}
To simplify notation we work with the standard hypercube $C=[0,1]^n$. Although the results will be valid for general boxes and parallelopipeds.

Let $v$ be a vertex of $C$, we denote by $H(v)$ the Hamming weight of $v$. That is, the number of nonzero coordinates in $v$. We start by proving a lemma which will help in proving theorem \ref{mainThm}.

\begin{lemma}
Let $f$ be a polynomial of degree up to $n-1$, then the following identity is true:
$$\underset{v\in V, H(v)=even}{\sum}f(v)=\underset{v\in V,H(v)=odd}{\sum}f(v)$$
\end{lemma}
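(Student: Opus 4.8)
The plan is to restate the claim in the equivalent form $\sum_{v\in V}(-1)^{H(v)}f(v)=0$ and to exploit the fact that, on the vertex set $V=\{0,1\}^n$, every polynomial agrees with a \emph{multilinear} (square-free) polynomial. Concretely, since $x_i^2=x_i$ for $x_i\in\{0,1\}$, repeatedly replacing $x_i^2$ by $x_i$ rewrites $f$, without changing any of its values on $V$, as a linear combination of the square-free monomials $m_A(x)=\prod_{i\in A}x_i$, $A\subseteq\{1,\dots,n\}$. This rewriting never raises the degree of a monomial, so every $m_A$ that appears in the new expression for $f$ has $|A|\le n-1$; in particular the full monomial $x_1x_2\cdots x_n$ does not occur.

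By linearity of the functional $f\mapsto\sum_{v\in V}(-1)^{H(v)}f(v)$, it then suffices to prove the identity for a single square-free monomial $m_A$ with $|A|\le n-1$. For such a monomial, $m_A(v)$ equals $1$ when $v_i=1$ for all $i\in A$ and $0$ otherwise, so the sum runs only over vertices that are $1$ on $A$ and arbitrary on the complement $B=\{1,\dots,n\}\setminus A$. Writing $(-1)^{H(v)}=(-1)^{|A|}\prod_{j\in B}(-1)^{v_j}$ and summing the free coordinates independently gives
\[
\sum_{v\in V}(-1)^{H(v)}m_A(v)=(-1)^{|A|}\prod_{j\in B}\bigl((-1)^0+(-1)^1\bigr)=(-1)^{|A|}\prod_{j\in B}0 .
\]
Since $|A|\le n-1$ forces $B\neq\emptyset$, the product is nonempty and contains a factor equal to $0$, so the whole expression vanishes; splitting $V$ into vertices of even and odd Hamming weight turns this into the asserted equality.

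The argument is short, and the only point that needs care is the first reduction: one must observe that passing to the multilinear representative neither changes the function on $V$ nor increases the degree, so that the hypothesis $\deg f\le n-1$ is inherited and the critical monomial $x_1\cdots x_n$ is excluded. Alternatively one could induct on $n$: write the multilinear form of $f$ as $g(x_1,\dots,x_{n-1})+x_n\,h(x_1,\dots,x_{n-1})$ with $\deg h\le n-2$, split the sum according to $v_n\in\{0,1\}$, watch the $g$-terms cancel, and apply the inductive hypothesis to $h$, with the constant case $n=1$ (where the sum is $f(0)-f(1)=0$) as the base. I expect no genuine obstacle here beyond this bookkeeping.
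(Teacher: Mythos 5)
Your proof is correct and takes essentially the same route as the paper: reduce to a single monomial by linearity and use the fact that a monomial of degree at most $n-1$ omits some variable, so the alternating sum over the free coordinate(s) vanishes. The paper phrases this as pairing vertices that differ in one missing coordinate, while you factor the sum over all free coordinates (after a harmless and in fact unnecessary multilinearization step); the cancellation is the same.
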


\begin{proof}
This linear equation can be checked separately for any monimial of $f$. It is true for any monomial of degree up to $n-1$ for the following reason: the monomial contains at most $n-1$ different variables. Without loss of generality assume that $x_1$ does not appear in the monomial. 
We can separate the terms of the equation in the lemma into pairs $(f(0,x_2,\dots x_n),f(1,x_2,\dots,x_n))$. The elements of each pair are equal and they appear on different sides of the equation of the lemma. Therefore the pairs cancel out and we obtain the equality.  
\end{proof}

As a corollary we obtain that given the values on $2^n-1$ vertices, the value of the remaining vertex can be approximated to $n-1$-th order: $n-1=deg(V-\{v\}\to v)$. This is done using the equation of the lemma, as shown in the following example.

\begin{example}\label{SecOrderOf3d}
Consider the three dimensional case, and let $f$ be a quadratic polynomial, the above lemma explicitly constructs the value $f$ on a vertex given its values on the rest. For example, for the vertex (1,1,1) one obtains:
\begin{multline*}
f(1,1,1)=
f(0,0,0)+f(1,1,0)+f(1,0,1)+f(0,1,1)\\-f(1,0,0)-f(0,1,0)-f(0,0,1)
\end{multline*}
\end{example}

We use the lemma to prove the more general theorem:

\begin{theorem}\label{ApproxChosenPoints}
Let $C=[0,1]^n$ be a hypercube and let $f$ be a polynomial of degree up to $k$. The values of $f$ on the hypercube vertices $v$ with $H(v)\leq k$ determine its values on all hypercube vertices.
\end{theorem}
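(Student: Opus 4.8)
\section*{Proof proposal}

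The plan is to induct on the Hamming weight of the vertex whose value we wish to recover, using the Lemma as the engine of the induction. Concretely, I would show: for every vertex $v$ with $H(v) \geq k+1$, the value $f(v)$ is determined by the values of $f$ on vertices of strictly smaller Hamming weight. Iterating this from the lowest weight above $k$ up to weight $n$ then expresses every $f(v)$ in terms of the values on $\{v : H(v) \leq k\}$, which is the claim.

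The key step is the following localization of the Lemma. Fix a vertex $v$ with $H(v) = m \geq k+1$, and let $I = \{i : v_i = 1\}$, so $\|I\| = m$. Consider the sub-box $C_I$ spanned by the coordinates in $I$ with all other coordinates frozen to $0$; its $2^m$ vertices are exactly the vertices $w$ of $C$ with $\mathrm{supp}(w) \subseteq I$, and $v$ is the unique vertex of $C_I$ of maximal weight $m$ in $C_I$. The restriction of $f$ to $C_I$ is a polynomial of degree at most $k \leq m-1$, so I can apply the Lemma on the $m$-dimensional box $C_I$. This yields a linear relation among the values $\{f(w) : \mathrm{supp}(w) \subseteq I\}$ in which $f(v)$ appears with coefficient $\pm 1$ and every other vertex $w$ appearing has $\mathrm{supp}(w) \subsetneq I$, hence $H(w) < m$. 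Solving for $f(v)$ expresses it in terms of values at strictly lower weight, completing the inductive step. Combined with Example \ref{SecOrderOf3d} as the intuition for the $m = n$, $k = n-1$ case, this is really just the Lemma applied to every maximal-weight vertex of every coordinate sub-box.

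I expect the only genuine subtlety to be bookkeeping: one must check that the Lemma genuinely applies to the lower-dimensional box $C_I$, i.e.\ that ``polynomial of degree up to $k$ on $[0,1]^n$'' restricts to ``polynomial of degree up to $k$ on $[0,1]^{\|I\|}$'' after freezing the complementary coordinates to $0$ (clear, since setting variables to constants does not raise degree), and that the degree bound $k \leq \|I\| - 1$ needed by the Lemma is exactly what $H(v) = \|I\| \geq k+1$ provides. A secondary point worth a sentence is that the induction is well-founded because each application strictly decreases the maximal Hamming weight among vertices with ``unknown'' values, and terminates once all remaining unknowns have weight $\leq k$; there is no circularity because the relation derived for $v$ never involves another weight-$m$ vertex. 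Finally, I would remark that Theorem \ref{ApproxChosenPoints} immediately implies Theorem \ref{mainThm}: taking $U = \{v \in V : H(v) \leq k\}$ gives $\|U\| = \sum_{i=0}^{k} \binom{n}{i}$, and the theorem just proved shows $\deg(U \to v) \geq k$ for every $v \in V - U$, while a dimension count (the square-free monomials of degree $\leq k$ are linearly independent on $V$) shows the degree is exactly $k$ and that $\|U\|$ cannot be reduced.
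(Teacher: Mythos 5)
Your proposal is correct and follows essentially the same route as the paper: the paper also applies the Lemma to the sub-hypercube spanned by the support of each vertex of weight $m>k$ (where $v$ is the unique maximal-weight vertex and $\deg f\leq k\leq m-1$), proceeding by increasing Hamming weight from $k+1$ up to $n$. Your version just spells out the bookkeeping (degree under restriction, well-foundedness) that the paper leaves implicit.
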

\begin{proof}
Apply the lemma repeatedly. Use it first to compute $f(v)$ for all vertices with $H(v)=k+1$ to $k$-th order, this can be done since for each vertex $v$ with $H(v)=k+1$ there is a $k+1$ dimensional sub-hypercube for which $v$ is a vertex and the rest of the vertices satisfy $H(v)\leq k$. Then use those values to compute $f(v)$ for $H(v)=k+2$ vertices, etc. until obtaining an approximation for all hypercube vertices.
\end{proof}

Note that theorem \ref{mainThm} follows from the above. Indeed the number of vertices with $H(v)\leq k$ is $\sum_{i=0}^k{n \choose i}$.

\begin{example}
Say we have $12$ different possible mutations and we wish to approximate a fitness function to the second order at all $2^{12}=4096$ mutation combinations, in order to generate an approximate fitness landscape. It is enough to measure the fitness of the wildtype (the case with no mutations), all the single mutations and all pairs of mutations, these are $1+12+{12 \choose 2}=79$ measurements, in order to get this approximation. If we wanted a second order approximation of general points in $12$ dimensions, we must use ${n+k \choose k}={12+2 \choose 2}=91$ points. If we wish to estimate the fitness landscape to third order, we need  $1+12+{12 \choose 2}+{12 \choose 3}=299$ instead of ${n+k \choose k}={12+3 \choose 3}=455$ needed for points in general position.
\end{example}

\begin{remark}\label{NumPointsTight}
The statement of theorem \ref{mainThm} is tight. i.e. there is no approximation of order $k$ to all $n$-dimensional box vertices using less than $\sum_{i=0}^k{n \choose i}$ values at vertices.
\end{remark}

\begin{proof}
Let $M_{ij}=f_i(v_j)$ be the matrix where $f_i$ is the complete set of independent nomomials of degree up to $k$ and $v_j$ the vertices of the hypercube $C$. We need to show that $rank(M)=\sum_{i=0}^k{n \choose i}$. We already know that $rank(M)\leq\sum_{i=0}^k{n \choose i}$ because from this number of columns is enough to obtain all columns of $M$ by linear combinations, as explained in the proof of theorem \ref{ApproxChosenPoints}. We have to check that $rank(M)\geq\sum_{i=0}^k{n \choose i}$. Consider the subset of rows of $M$ defined by all squarefree monomials (e.g. $x$ and $xy$ are in $x^2$ and $x^3$ are out). There are exactly $\sum_{i=0}^k{n \choose i}$ such rows, and we will show that they are independent.

To do so we order the rows first by decreasing Hamming weight, and then by  lexicographic order, for example in the case $k=2,n=3$ we get: $x_1x_2,x_1x_3,x_2x_3,x_1,x_2,x_3,1$. We claim that for each row there is a column which is 0 in all rows above and 1 in this row, this will prove the rows are linearly independent.   

Given a monomial $f$ we associate to it a vertex of the hypercube defined by the variables it includes $v(f)$ (for instance the monomial $f=x_2x_3$ will have the associated vertex $v(f)=(0,1,1)$). Note that the matrix element in row $f$ and column $v(f)$ is 1. Also note that for all rows $f_i$ above $f$, the element in row $f_i$ and column $v(f)$ is zero: indeed, by our ordering, $H(f_i)\geq H(f)$ but they are not equal, hence $f_i$ contains a variable not in $f$.

We conclude that $M$ with this new rows and corresponding columns is a lower traingular square matrix with ones on the diagonal, hence of full rank.

\end{proof}

\begin{example}
For $n=4,k=2$ the original matrix constructed in the proof has ${4+2 \choose 2}=15$ rows and $2^4=16$ columns. The proof above gives a square triangular matrix of size $\sum_{i=0}^2{4 \choose i}=11$ as follows (columns for vertices of hypercube, rows for second order monomials): 

\tiny
\[ 
    \bordermatrix{ & 1100 & 1010 & 1001 & 0110 & 0101 & 0011 & 1000 & 0100 & 0010 & 0001 & 0000  \cr
      x_1x_2 & 1 & 0 & 0 & 0 & 0 & 0 & 0 &0 & 0 & 0 & 0 \cr
      x_1x_3 & 0 & 1 & 0 & 0 & 0 & 0 & 0 &0 & 0 & 0  & 0\cr
      x_1x_4 & 0 & 0 & 1 & 0 & 0 & 0 & 0 &0 & 0 & 0& 0 \cr
      x_2x_3 & 0 & 0 & 0 & 1 & 0 & 0 & 0 &0 & 0 & 0 & 0 \cr
      x_2x_4 & 0 & 0 & 0 & 0 & 1 & 0 & 0 &0 & 0 & 0& 0  \cr
      x_3x_4 & 0 & 0 & 0 & 0 & 0 & 1 & 0 &0 & 0 & 0 & 0 \cr
      x_1 & 1 & 1 & 1 & 0 & 0 & 0 & 1 &0 & 0 & 0 & 0 \cr
      x_2 & 1 & 0 & 0 & 1 & 1 & 0 & 0 &1 & 0 & 0 & 0  \cr
      x_3 & 0 & 1 & 0 & 1 & 0 & 1 & 0 &0 & 1 & 0& 0   \cr
      x_4 & 0 & 0 & 1 & 0 & 1 & 1 & 0 &0 & 0 & 1& 0   \cr
      1 & 1 & 1 & 1 & 1 & 1 & 1 & 1 &1 & 1 & 1& 1} \qquad
\]
\normalsize

\end{example}

\section{An algorithm to check if a set of vertices of a box is enough to approximate any function to a given order}
We know from Theorem \ref{ApproxChosenPoints} that there are sets of $m=\sum_{i=0}^k{n \choose i}$ vertices which allow us to compute all $2^n$ values of a polynomial $f$ of order $k$. Given the value of $f$ on an arbitrary set of vertices, we want to ask to which order one can approximate the values of $f$ at all of the other vertices of the box. Note that the size of the set alone does not determine the order of approximation, as in the example of Figure \ref{ApproxNonapproxExampleFig}.

\begin{figure}
\centering
\includegraphics[width=1\textwidth]{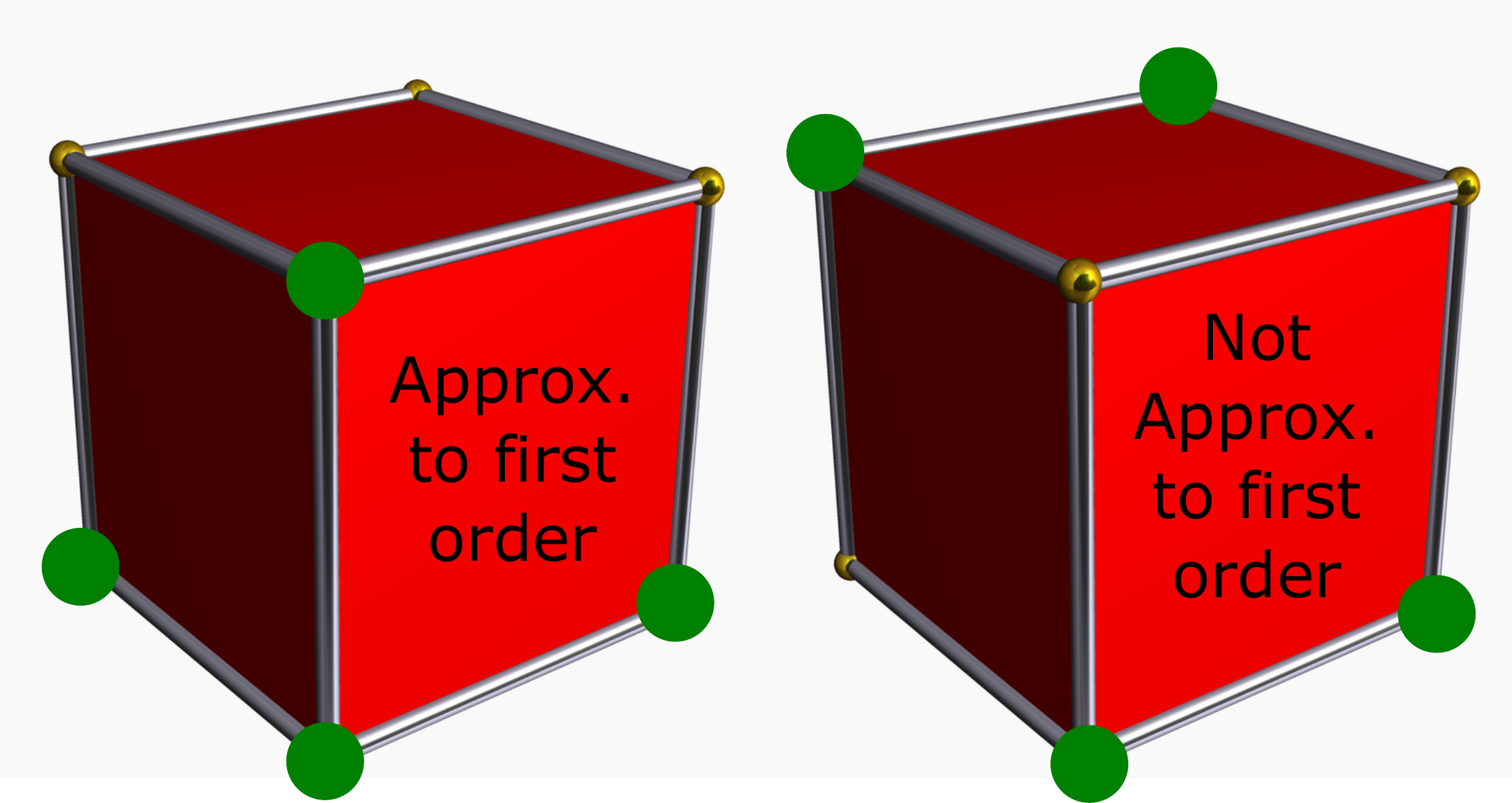}
\caption{An example of two subsets of vertices of a 3d box (green dots). The value of a function on the left set of vertices can be used to approximate it to the first order on any vertex of the box, whereas the values on the right set of vertices cannot.}
\label{ApproxNonapproxExampleFig}
\end{figure}

The idea of the algorithm is as follows. Given a set $V$ of vertices, we obtain the corresponding columns of the matrix defined in Remark \ref{NumPointsTight}, and following the idea of the proof of that remark, we want this submatrix to be of full rank ($\sum_{i=0}^k{n \choose i}$). A full rank gaurantees that any polynomial of degree up to $k$ can be computed on any hypercube vertex using the values at the vertices $V$.   
This linear algebra reformulation provides an efficient algorithms for the following problems:
\begin{enumerate}
\item
Given a set of vertices $V$ of the box and another vertex $v$, to which order we can approximate $f(v)$ knowing only $f(V)$? A specific approximation can also be computed. 
\item
Given a set of vertices $V$, can we approximate all the vertices of the box, to which order? Again, the approximations can be given (each approximation can be computed in polynomial time. Since there are $2^n$ such approximation, all of the approximations together cannot be computed in polynomial time ).  
\end{enumerate}

For example, we provide an algorithm to  compute an approximation for a vertex. The other algorithms can be deduced similarly:  
\begin{algorithm}\label{alg}
Input: A set $V$ of vertices of the hypercube, the values $f(V)$ of the function $f$, another vertex $v$ and a natural number $k$.

Output: An approximation of $f(v)$ to the $k$-th order.

\begin{itemize}
\item
For each vertex in $v_i\in V$ and for $v$ write the corresponding column as in Remark \ref{NumPointsTight}. It gives vectors $u_i,u$ corresponding to $v_i,v$ of length $\sum_{i=0}^k{n \choose i}$ (It is enough to consider only the rows we considered in the proof of theorem \ref{NumPointsTight}).
\item
Write $u$ as a linear combination of the vectors corresponding to $V$: $u=\sum a_iu_i$ (if this is impossible, an approximation does not exist;  return error).
\item
Return $f(v)=\sum a_i f(v_i)$ as the desired approximation 
\end{itemize} 
\end{algorithm}

\section{A random set of vertices linearly approximate the rest with high probability}

In some applications, we obtain values of $f$ on random sets of vertices and we seek an approximation of higher order. An example is fitness landscape evolutionary experiments for which we measure a set of mutations which occur randomly during the evolutionary process (for example \cite{Sarkisyan2016} for random mutations and \cite{Kvitek2011} for evolution). We concentrate here on the case of linear approximation $k=1$. We are looking for the probability that a set of hypercube vertices of cardinality $n+1$ will be affinely independent, which is equivalent to be able to approximate all vertices to the first order.

Currently, the exact probability is not known, but there is an asymptotic upper bound as $n\to \infty$. We are looking for the probability of a random 0-1 matrix to be linearly independent. There is a lower bound for this given by $1-(1/\sqrt{2}+o(1))^n$ \cite{Kahn1995,Tao2007,Bourgain2010}. It is conjectured that the exact asymptotics is given by $1-(1+o(1))n^2/2^n$. Note that this asymptotics reflects the probability that all rows of the matrix are distinct from each other (i.e. not choosing the same vertex of the box twice).

 For smaller values of $n$, although we do not know how to compute the probabilities over the $\RR$, we can compute it over $\mathbb{F}_2$ instead:

\begin{proposition}\label{F2prob}
Consider the hypercube of dimension $n$ over the field $\mathbb{F}_2$. The probability of $n+1$ points to be affinely independent is $$\frac{2^n(2^n-1)(2^n-2)(2^n-4)\dots(2^n-2^{n-1})}{2^n(2^n-1)(2^n-2)(2^n-3)\dots(2^n-n)}$$ The probability monotonically decreases and converges when $n\to\infty$ to a finite value  $(\frac{1}{2};\frac{1}{2})_{\infty}\approx 0.288$, where $(\frac{1}{2};\frac{1}{2})_{\infty}$ denotes the q-Pochhammer symbol with $q=1/2$ \cite{Andrews1986}.
\end{proposition}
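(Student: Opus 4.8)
The plan is to reduce everything to a count of ordered tuples followed by a short analytic estimate. Write $N=2^n$ for the number of vertices of $\mathbb{F}_2^n$, and read the statement as: draw $n+1$ points uniformly at random without repetition (equivalently, condition the with‑repetition model on the points being pairwise distinct). The number of ordered $(n+1)$‑tuples of distinct points is the falling factorial $\prod_{j=0}^{n}(N-j)=N(N-1)(N-2)(N-3)\cdots(N-n)$, which is the denominator. For the numerator I would build an affinely independent tuple $(v_0,\dots,v_n)$ greedily: given affinely independent $v_0,\dots,v_{i-1}$, their affine hull is an affine subspace of $\mathbb{F}_2^n$ of dimension $i-1$ and hence contains exactly $2^{i-1}$ points, so $v_i$ must be chosen outside it, leaving $N-2^{i-1}$ admissible choices, while $v_0$ is free ($N$ choices). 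Multiplying gives $N\prod_{i=1}^{n}(N-2^{i-1})=N(N-1)(N-2)(N-4)\cdots(N-2^{n-1})$; dividing by the falling factorial yields exactly the displayed fraction. Since affine independence forces the points to be distinct, these favourable tuples are genuinely a subset of the distinct ones, so the ratio is a bona fide probability.

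For the limit I would cancel the common factor $2^n$ from numerator and denominator and reindex. Putting $m=n-i$ turns the remaining numerator into $\prod_{i=0}^{n-1}(2^n-2^i)=\prod_{m=1}^{n}2^n(1-2^{-m})=2^{n^2}A_n$, where $A_n:=\prod_{m=1}^{n}(1-2^{-m})$, and the remaining denominator becomes $\prod_{j=1}^{n}(2^n-j)=2^{n^2}B_n$, where $B_n:=\prod_{j=1}^{n}(1-j\,2^{-n})$. Thus the probability equals $p_n=A_n/B_n$. Now $A_n\to(\tfrac12;\tfrac12)_\infty\approx 0.2888$ by the very definition of the $q$‑Pochhammer symbol (the series $\sum 2^{-m}$ converges, so the infinite product does), while $B_n\to 1$ because $-\log B_n=\sum_{j=1}^{n}\bigl(-\log(1-j2^{-n})\bigr)\le\sum_{j=1}^{n}2j\,2^{-n}=n(n+1)2^{-n}\to 0$. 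Hence $p_n\to(\tfrac12;\tfrac12)_\infty$.

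For monotonicity I would examine the ratio $p_{n+1}/p_n=(A_{n+1}/A_n)(B_n/B_{n+1})=(1-2^{-(n+1)})\,B_n/B_{n+1}$, so it is enough to show $B_{n+1}/B_n\ge 1-2^{-(n+1)}$, with strict inequality for $n\ge2$. Writing $B_{n+1}/B_n=\bigl(1-(n+1)2^{-(n+1)}\bigr)\prod_{j=1}^{n}\frac{1-j2^{-(n+1)}}{1-j2^{-n}}$ and using the elementary bound $\frac{1-j2^{-(n+1)}}{1-j2^{-n}}\ge 1+j\,2^{-(n+1)}$ gives $B_{n+1}/B_n\ge\bigl(1-(n+1)2^{-(n+1)}\bigr)\bigl(1+\tfrac{n(n+1)}{2}2^{-(n+1)}\bigr)$, and expanding this product shows it is $\ge1$ as soon as $\tfrac n2-1\ge\tfrac{n(n+1)}{2}2^{-(n+1)}$, which holds for every $n\ge3$. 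The remaining small cases are checked by hand ($p_1=p_2=1$ and $p_3=\tfrac{8\cdot7\cdot6\cdot4}{8\cdot7\cdot6\cdot5}=\tfrac45$), so the sequence is non‑increasing overall and strictly decreasing from $n=2$ onward. I expect this last estimate to be the only subtle point: the naive bound $B_{n+1}/B_n>1-(n+1)2^{-(n+1)}$ is too weak (it lies below $1-2^{-(n+1)}$), so one must retain the quadratic ($\sum_j j$) contribution of the product over $j$; the counting step and the limit are otherwise routine bookkeeping.
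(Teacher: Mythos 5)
Your proposal is correct, and its two main components coincide with the paper's own proof: the count of affinely independent ordered tuples via the greedy choice of each new point outside the affine hull of the previous ones (which has $2^{i-1}$ points), normalized by the falling factorial, and the limit obtained by factoring out the powers of $2$ so that the denominator product tends to $1$ while the numerator tends to the $q$-Pochhammer product. Where you genuinely diverge is the monotonicity step. The paper cancels the common factors in $p_{n+1}/p_n$ and then pairs each surviving numerator factor $1-j/2^{n}=1-2j/2^{n+1}$ with the denominator factor $1-(j+1)/2^{n+1}$, so that $2j\geq j+1$ gives a termwise bound $\leq 1$ (with equality exactly at $j=1$, which is why only $n=1$ gives $p_{n+1}=p_n$); this is a one-line comparison with no expansion and no separate treatment of small $n$. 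You instead write $p_n=A_n/B_n$ and prove $B_{n+1}\geq B_n$ by keeping the quadratic contribution $\sum_j j$ of the product expansion; this is valid (and your remark that the naive linear bound $1-(n+1)2^{-(n+1)}$ is too weak is accurate), but it costs you the hand-check of $n\leq 2$ and obscures the clean structural reason for the inequality that the paper's factor-pairing makes visible. Both arguments establish the proposition; the paper's is the more economical, yours makes explicit the quantitative margin $B_{n+1}/B_n\geq 1$.
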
 

\begin{proof}
The number of possible choices of subsets of vertices of the hypercube of cardinality $n+1$ is ${2^n \choose n+1}$. To choose an affinely indepedent set  we have $\frac{2^n(2^n-1)(2^n-2)(2^n-4)\dots(2^n-2^{n-1})}{n+1!}$ options, this expression was computed as the number of options to choose the new vertex affinely independent on the previuos ones, divided by all possible orders. Hence the probability for independent set over $\mathbb{F}_2$ is  $\frac{2^n(2^n-1)(2^n-2)(2^n-4)\dots(2^n-2^{n-1})}{2^n(2^n-1)(2^n-2)(2^n-3)\dots(2^n-n)}$, we divide the numerator and denominator by $2^{n(n+1)}$ and obtain that for large $n$ the denominator ${\displaystyle \lim_{n \to \infty} \prod_{m=0}^{n} (1-m/2^n)}=1$ and the numerator ${\displaystyle \lim_{n \to \infty} \prod_{m=0}^{n-1} (1-2^{m-n})}=(\frac{1}{2};\frac{1}{2})_{\infty}\approx 0.288$ is the q-Pochhammer symbol. It remains to show that the sequence is monotonically decreasing, to do so we compute the ratio:

\begin{multline*}
\frac{a_{n+1}}{a_n}=\frac{2^{(n+1)}(2^{(n+1)}-1)(2^{(n+1)}-2)(2^{(n+1)}-4)\dots(2^{(n+1)}-2^n)}{2^{(n+1)}(2^{(n+1)}-1)(2^{(n+1)}-2)(2^{(n+1)}-3)\dots(2^{(n+1)}-(n+1))}\cdot \\ \cdot \frac{2^n(2^n-1)(2^n-2)(2^n-3)\dots(2^n-n)}{2^n(2^n-1)(2^n-2)(2^n-4)\dots(2^n-2^{n-1})}=\\
=\frac{1(1-1/2^{(n+1)})(1-2/2^{(n+1)})(1-4/2^{(n+1)})\dots(1-1/2)}{1(1-1/2^{(n+1)})(1-2/2^{(n+1)})(1-3/2^{(n+1)})\dots(1-(n+1)/2^{(n+1)})}\cdot \\ \cdot \frac{1(1-1/2^n)(1-2/2^n)(1-3/2^n)\dots(1-n/2^n)}{1(1-1/2^n)(1-2/2^n)(1-4/2^n)\dots(1-1/2)}=\\
= \frac{(1-1/2^n)(1-2/2^n)(1-3/2^n)\dots(1-n/2^n)}{(1-2/2^{(n+1)})(1-3/2^{(n+1)})\dots(1-(n+1)/2^{(n+1)})}<1
\end{multline*}
Where the last inequality follows by elementwise comparison of the numerator and denominator, and true for $n>1$ (for $n=1$ there is an equality).
\end{proof}

Note that the probability computed above for $\mathbb{F}_2$ is a lower bound on the probability seek, indeed:

\begin{proposition}
If a set of vertices is affinely indepedent over $\mathbb{F}_2$, it is also affinely independent over $\RR$.
\end{proposition}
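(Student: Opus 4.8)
The plan is to prove the contrapositive in disguise: I will show directly that affine independence over $\mathbb{F}_2$ implies affine independence over $\RR$ by relating both notions to the ordinary rank (over the respective fields) of an integer matrix built from the vertices. Concretely, given vertices $v_0,v_1,\dots,v_n \in \{0,1\}^n$, form the matrix $A$ whose rows are $v_1-v_0,\dots,v_n-v_0$ (an $n\times n$ integer matrix). Affine independence over $\RR$ is equivalent to $\det A \neq 0$ in $\RR$, and affine independence over $\mathbb{F}_2$ is equivalent to $\det A \not\equiv 0 \pmod 2$ — here one must be slightly careful, since over $\mathbb{F}_2$ the differences $v_i - v_0$ are taken mod $2$, but reduction mod $2$ is a ring homomorphism $\ZZ \to \mathbb{F}_2$, so $\det(A \bmod 2) = (\det A) \bmod 2$.

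First I would set up this dictionary carefully: a set $\{v_0,\dots,v_n\}$ is affinely dependent over a field $F$ iff there exist $c_0,\dots,c_n \in F$, not all zero, with $\sum c_i = 0$ and $\sum c_i v_i = 0$; eliminating $c_0 = -\sum_{i\ge 1} c_i$ this is equivalent to the existence of a nonzero $(c_1,\dots,c_n)$ with $\sum_{i\ge1} c_i (v_i - v_0) = 0$, i.e.\ to the rows of $A$ being linearly dependent over $F$. So affine independence over $F$ $\iff$ $\det_F A \neq 0$. Second, I would invoke the homomorphism property: if $\det A$ is odd then $\det A \neq 0$ in $\RR$ (an odd integer is nonzero), hence the vertices are affinely independent over $\RR$; and $\det A$ odd is exactly the statement that $\det_{\mathbb{F}_2} A \neq 0$, i.e.\ affine independence over $\mathbb{F}_2$. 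This chain gives the implication immediately.

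An even slicker phrasing, which I might prefer for brevity, avoids determinants entirely: suppose $\{v_0,\dots,v_n\}$ were affinely dependent over $\RR$. Then there is a nonzero rational relation $\sum c_i v_i = 0$, $\sum c_i = 0$; clearing denominators we may take $c_i \in \ZZ$, and dividing out the greatest common divisor we may assume $\gcd(c_0,\dots,c_n) = 1$, so in particular not all $c_i$ are even. Reducing the relation mod $2$ gives a relation over $\mathbb{F}_2$ with coefficients $\bar c_i \in \mathbb{F}_2$ not all zero, witnessing affine dependence over $\mathbb{F}_2$. Contrapositive done. I would present this version as the proof body.

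There is no serious obstacle here; the statement is essentially a reduction-mod-$2$ triviality. The only point that needs a line of care is the reduction step: one must clear denominators and then remove the common factor of $2$ so that the mod-$2$ image of the relation is genuinely nonzero — forgetting the gcd normalization would be the one place a careless argument could slip. I should also remark (a one-liner) that this gives only one direction: the converse fails, since an integer matrix can be nonsingular over $\RR$ yet singular mod $2$ (e.g.\ a matrix with even determinant), which is precisely why the $\mathbb{F}_2$ probability in Proposition \ref{F2prob} is a genuine lower bound and not the exact value over $\RR$.
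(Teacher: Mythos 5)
Your proof is correct and is essentially the paper's own argument: both prove the contrapositive by taking a real (hence rational) dependence relation, clearing denominators, removing the common factor of $2$ so that not all coefficients are even, and reducing mod $2$. The only cosmetic differences are that the paper first reduces affine to linear independence by moving a vertex to the origin via a hypercube symmetry, whereas you keep the affine relation $\sum c_i v_i=0$, $\sum c_i=0$ throughout, and you normalize by the full gcd rather than just the maximal power of $2$.
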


\begin{proof}
Without loss of generality assume that the origin is in the set of vertices, otherwise apply a symmetry on the hypercube such that this is the case. We need to show that the rest of vertices are linearly independent. We show conversely, that if the set is linearly dependent over $\RR$ it is also linearly dependent over  $\mathbb{F}_2$. Indeed, by assumption there is a linear combination $\sum a_iv_i=0$ With $a_i\in \RR$, this $a_i$ can be chosen rational, since all vertices of the hypercube have rational coefficient. If $a_i$ are not integral, we multiply by the common denominator of the $a_i$ to make them so. If all new $a_i$ are even, we divide by the maximal power of two dividing all of them, we now obtained $a_i$ which are integral, not all even and $\sum a_iv_i=0$.  We now take this equation mod 2 and see that the vertices are depedent over $\mathbb{F}_2$.
\end{proof}

 Using algorithm \ref{alg} we can compute the real probabilities of approximation for small values of $n$, we plot this probabilities for the first order approximation and the $\mathbb{F}_2$ lower bound in Figure \ref{LinearApproxFig}, for very small $n$ the approximation is fine, but for larger $n$ the real probability is increasing, while the $\mathbb{F}_2$ bound is decreasing to $0.288$. The increasing probabilites mean that a random set of $n+1$ mutations is with high probability useful in approximating the entire fitness landscape to the first order.

\begin{figure}\label{LinearApproxFig}
\centering
\includegraphics[width=1\textwidth]{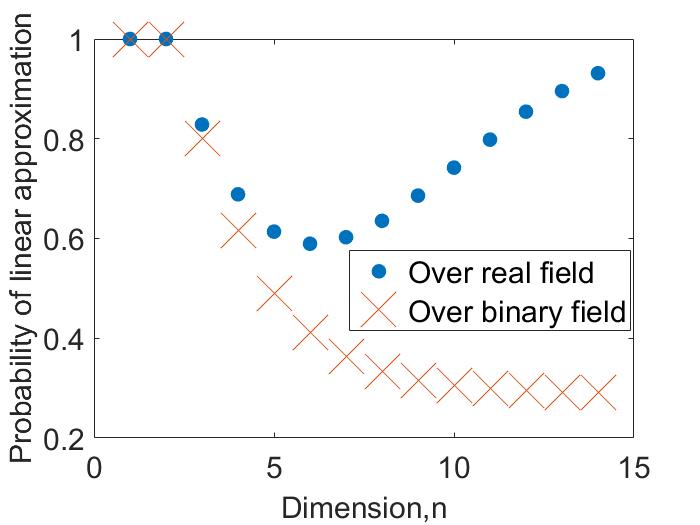}
\caption{The probability of random $n+1$ values of $f$ on approximate all the box vertices as function of the dimension. The probability over $\mathbb{F}_2$ was computed using proposition \ref{F2prob}. For the real case, the values of 1-6 were computed presicely by inspecting all possible subsets of the box and using algorithm \ref{alg}. The values 7-14 were approximated using a Monte-Carlo simulation, 100000 random subsets were selected and we counted how many of them approximate the entire box using algorithm \ref{alg}.}
\label{LinearApproxFig}
\end{figure}

\section{Conclusions}
We showed that the biological applications of predicting the effect of drug combinations and estimating values in fitness lndscape can be modelled as approximation problems of functions on box vertices. We defined it formally using algebraic geometry and the zero locus of polynomials of given degrees, and proved that with the correct choice of box vertices, these problems can be solved better than expcted in terms of degree of approximation for a given number of vertices used. Specifically, for a box of dimension $n$ and a desired approximation degree $k$, $\sum_{i=0}^k{n \choose i}$ given vertices are suffice for approximation of all vertices, instead of $\binom{n+k}{k}$ expected if points were in general position.
 We also discussed the case where we do not choose the points, in the case of linear approximation and for large values of  $n$, the probability to obtain linear approximation using $n+1$ points exponentially close to 1.

\bibliographystyle{plain}
\bibliography{approx_hypercube_arxiv3.bbl}

\end{document}